\documentclass{amsart}
\usepackage{amssymb, url}

\newtheorem{theorem}{Theorem}[section]
\newtheorem{lemma}[theorem]{Lemma}

\theoremstyle{definition}

\newtheorem{proposition}[theorem]{Proposition}
\newtheorem{corollary}[theorem]{Corollary}

\theoremstyle{remark}
\newtheorem{remark}[theorem]{Remark}

\numberwithin{equation}{section}

\begin{document}
\title[Local holomorphic maps preserving $(p,p)$-forms]
{On local holomorphic maps between Hermitian manifolds preserving $(p,p)$-forms}

\author[S. T. Chan]{Shan Tai Chan}
\address{State Key Laboratory of Mathematical Sciences \& Institute of Mathematics, Academy of Mathematics and Systems Science, Chinese Academy of Sciences, Beijing 100190, China.}
\email{mastchan@amss.ac.cn}
\thanks{The author is partially supported by a program of the Chinese Academy of Sciences}

\subjclass[2020]{Primary 32H02; Secondary 32Q15, 53C24}

\begin{abstract}
In this article, we generalize some results in Chan-Yuan [Ann.\,Sc. Norm.\,Super.\,Pisa\,Cl.\,Sci.\,(5) 26 (2025), 619--644] to local holomorphic maps between Hermitian manifolds preserving $(p,p)$-forms. In particular, we obtain further rigidity theorems and non-existence theorems for such maps.
\end{abstract}

\maketitle

\section{Introduction}
In Chan-Yuan \cite{CY25} we have studied local holomorphic maps between K\"ahler manifolds preserving $(p,p)$-forms, namely, the $p$-th exterior power of their K\"ahler forms. In particular, we have established several rigidity theorems and non-existence theorems.
Later on, we realize that by similar methods, some of these results can be generalized to the case of local holomorphic maps between Hermitian manifolds preserving $(p,p)$-forms, namely the $p$-th exterior power of their \text{Hermitian} forms.
In this article, we aim at providing further rigidity theorems and \text{non-existence} \text{theorems} for local holomorphic maps between Hermitian manifolds preserving $(p,p)$-forms.

We will follow the settings in Chan-Yuan \cite{CY25} throughout this article. Recall that a (finite-dimensional) complex space form is a connected complete K\"ahler manifold of constant holomorphic sectional curvature. We write $X=X(c)$ to indicate that $X$ is of constant holomorphic sectional curvature $c$. The universal covering space of a complex space form of complex dimension $n$ is either the complex Euclidean space $\mathbb C^n$, the complex unit ball $\mathbb B^n$ or the complex projective space $\mathbb P^n$. We say that two complex space forms $M$ and $N$ are of the same type if $M=M(c)$ and $N=N(c')$ such that either $c$ and $c'$ are of the same sign or $c=c'=0$. We say that $M$ and $N$ are of different types if $M$ and $N$ are not of the same type.

In \cite{Ca53}, Calabi has proved that there does not exist a local holomorphic isometric embedding between complex space forms of different types.
This motivates our study in Chan-Yuan \cite{CY25}, and we have partially generalized Calabi's result to the case of local holomorphic maps preserving $(p,p)$-forms (cf.\,Chan-Yuan \cite[Theorem 1.4]{CY25}).
Recently, Arezzo-Li-Loi \cite[Corollary 1.7]{ALL25} have proved that there does not exist a Ricci-flat non-compact K\"ahler submanifold of any finite dim. complex projective space equipped with the Fubini–Study metric. Hence, it follows from Chan-Yuan \cite[Theorem 1.4 and Remark 3.9]{CY25} and Arezzo-Li-Loi \cite[Corollary 1.7]{ALL25} that if $(M,\omega_M)$ and $(N,\omega_N)$ are complex space forms of complex dimensions $m$ and $n$ respectively such that $M$ and $N$ are of different types, then for any real constant $\lambda>0$ and any integer $p$, $1\le p\le \min\{m,n\}$, there does not exist a local holomorphic map $F:(M;x_0) \to (N;F(x_0))$ such that $F^*\omega_N^p = \lambda \omega_M^p$ holds. This yields a complete generalization of Calabi's result.
(Noting that in Chan-Yuan \cite[Theorem 1.4]{CY25} we exclude the case where the universal cover of $M$ is biholomorphic to $\mathbb{C}^m$, $N= \mathbb{P}^n$ and $2\le p=m<n$.)

\section{Preliminaries}
Given a Hermitian manifold $M$ of complex dimension $m$, we write $\omega_M$ for its Hermitian form, and $g_M$ for the corresponding Hermitian metric.
Fix $x_0\in M$ and let $w=(w_1,\ldots,w_m)$ be local holomorphic coordinates around $x_0$. For $1\le p\le m$, we write ${\bf A}=\{I=(i_1,\ldots,i_p): 1\le i_1<\cdots<i_p \le m\}$ and $\omega_M^p =  (\sqrt{-1})^p\sum_{I,J\in {\bf A}}(\omega_M^p)_{I\overline J} dw^I \wedge d\overline{w^J}$ in terms of the local holomorphic coordinates.
In what follows, for any $v\in \wedge^p T_{x_0}(M)$, we write $v=\sum_{I\in {\bf A}} v_{I} {\partial \over \partial w_{i_1}}\wedge\cdots \wedge {\partial \over \partial w_{i_p}}$, $I=(i_1,\ldots,i_p)$, and write
$\omega_M^p(x_0)(v,\overline v)
:= \sum_{I,J\in {\bf A}} (\omega_M^p)_{I\overline J}(x_0) v_{I} \overline{v_{J}}$.

For any complex manifold $X$, we write $T_X$ for the holomorphic tangent bundle of $X$, $\Theta_E$ for the curvature operator of the Chern connection on a Hermitian \text{holomorphic} vector bundle $(E,h)$ over $X$, and $\Lambda(X)$ for the Umehara algebra on $X$ as defined in Umehara \cite{Um88}.
More precisely, $\Lambda(X)$ is the associative algebra of real analytic functions on $X$ that consists of real linear combinations of functions $f\overline{g}+g\overline{f}$ for holomorphic functions $f$ and $g$ on $X$.

For any irreducible bounded symmetric domain $D$, we denote by $g_{D}$ the canonical K\"ahler-Einstein metric on $D$ so that the minimum of the holomorphic sectional curvatures of $(D,g_{D})$ equals $-2$. Given any bounded symmetric domain $\Omega$, we say that $g'_\Omega$ is a \emph{canonical K\"ahler metric} on $\Omega$ whenever $g'_\Omega=\sum_{j=1}^k \lambda_j {\rm Pr}_j^* g_{\Omega_j}$ for some real constants $\lambda_j>0$, $1\le j\le k$, where $\Omega_j$, $1\le j\le k$, are the irreducible factors of $\Omega$, and ${\rm Pr}_j:\Omega \to \Omega_j$ is the canonical projection onto the $j$-th irreducible factor $\Omega_j$, $1\le j\le k$.
We denote by $g_{\mathbb P^n}$ the Fubini-Study metric on the complex projective space $\mathbb P^n$ of complex dimension $n$ such that $(\mathbb P^n,g_{\mathbb P^n})$ is of constant holomorphic sectional curvature $2$.

\section{Rigidity theorem for local holomorphic maps between Hermitian manifolds preserving $(p,p)$-forms}\label{Sec:Local holo map pp form}
Let $(X,g)$ be a complex $n$-dimensional Hermitian manifold with the Hermitian form $\omega$, namely, $\omega=g(J\cdot,\cdot)$ with $J$ being the induced almost complex structure on $(X,g)$.
For simplicity, in the present article we write $(X,\omega)$ to indicate that $\omega=:\omega_X$ is the Hermitian form.
In terms of local holomorphic coordinates $(z_1,\ldots,z_n)$ on $X$, we may write $\omega=\sqrt{-1}\sum_{i,j=1}^n g_{i\overline j} dz_i\wedge d\overline{z_j}$ locally.
Then, for $1\le p\le n$ we have
\[ \omega^p = (\sqrt{-1})^p p! 
\sum_{\begin{tiny}\begin{split}
&1\le i_1<\cdots<i_p\le n,\\
&1\le j_1<\cdots<j_p\le n
\end{split}\end{tiny}}
 \det\begin{pmatrix} g_{i_s\overline{j_t}} \end{pmatrix}_{1\le s,t\le p}
 (dz_{i_1}\wedge d\overline{z_{j_1}}) \wedge \cdots \wedge 
 (dz_{i_p}\wedge d\overline{z_{j_p}}). \]

Let $(M,g_M)$ and $(N,g_N)$ be Hermitian manifolds of complex dimensions $m$ and $n$ respectively.
Fix some point $x_0\in M$.
Let $F:(M;x_0) \to (N;F(x_0))$ be a germ of holomorphic map such that
\begin{equation}\label{Eq:General_pp}
F^*\omega_N^p = \lambda \omega_M^p,
\end{equation}
for some integer $p$, $1\le p\le \min\{m,n\}$, and some real constant $\lambda>0$.
Then, such a germ of holomorphic map $F$ is of rank at least $p$ near $x_0$.

Write $w=(w_1,\ldots,w_m)$ (resp.\,$z=(z_1,\ldots,z_n)$) for the local holomorphic coordinates on $M$ (resp.\,$N$) around the point $x_0$ (resp.\,$F(x_0)$). We write
\[ \omega_M = \sqrt{-1}\sum_{i,j=1}^m (g_M)_{i\overline j} dw_i\wedge d\overline{w_j},\quad \omega_N = \sqrt{-1} \sum_{i,j=1}^n (g_N)_{i\overline j} dz_i\wedge d\overline{z_j}, \]
$F(w) = (F_1(w),\ldots,F_n(w))$ locally around $x_0$ in terms of the local holomorphic coordinates, and
\[ {\partial(F_{i_1},\ldots,F_{i_p})\over \partial(w_{l_1},\ldots,w_{l_p})}
:= \begin{pmatrix}
{\partial F_{i_1}\over \partial w_{l_1}} & \cdots & {\partial F_{i_1}\over \partial w_{l_p}}\\
\vdots & \ddots & \vdots\\
{\partial F_{i_p}\over \partial w_{l_1}} & \cdots & {\partial F_{i_p}\over \partial w_{l_p}}
\end{pmatrix}
 \]
for $1\le i_1<\cdots<i_p\le n$ and $1\le l_1<\cdots <l_p\le m$.
Then, we have
\[   F^*\omega_N^p
= C_p\cdot  
\sum_{\begin{tiny}\begin{split}
&1\le l_1<\cdots<l_p\le m,\\
&1\le k_1<\cdots<k_p\le m \end{split}\end{tiny}}
\Theta_{l_1,\ldots,l_p;\overline{k_1},\ldots,\overline{k_p}}(w)
dw_{l_1}\wedge \cdots \wedge dw_{l_p}
\wedge d\overline{w_{k_1}}\wedge \cdots \wedge d\overline{w_{k_p}},
\]
where $C_p:=(\sqrt{-1})^p (-1)^{p(p-1)\over 2} p!$ and
\[ \Theta_{l_1,\ldots,l_p;\overline{k_1},\ldots,\overline{k_p}}(w)
:=\sum_{\begin{tiny}\begin{split}
&1\le i_1<\cdots<i_p\le n,\\
&1\le j_1<\cdots<j_p\le n\end{split}\end{tiny}}
\det\begin{pmatrix} (g_N)_{i_s\overline{j_t}}(F(w)) \end{pmatrix}_{1\le s,t\le p}\hspace{2cm}\]
\[\hspace{5cm}\cdot \det \left({\partial(F_{i_1},\ldots,F_{i_p})\over \partial(w_{l_1},\ldots,w_{l_p})}\right)\det \left(\overline{{\partial(F_{j_1},\ldots,F_{j_p})\over \partial(w_{k_1},\ldots,w_{k_p})}}\right).
\]
In terms of the local holomorphic coordinates, (\ref{Eq:General_pp}) becomes
\begin{equation}\label{Eq:System_Eqs_pp}
\Theta_{l_1,\ldots,l_p;\overline{k_1},\ldots,\overline{k_p}}(w)
= \lambda \det\begin{pmatrix} (g_M)_{l_s\overline{k_t}}(w) \end{pmatrix}_{1\le s,t\le p} 
\end{equation}
for $1\le l_1<\cdots<l_p \le m$ and $1\le k_1<\cdots<k_p\le m$.

By the same proof of Proposition 3.1 in Chan-Yuan \cite{CY25}, we have the following rigidity result on local holomorphic maps from $M$ to $N$ between Hermitian manifolds $M$ and $N$ preserving $(p,p)$-forms when $1\le p<\dim_{\mathbb C}(M)$.

\begin{proposition}[$\text{cf.\,Chan-Yuan \cite[Proposition 3.1]{CY25}}$]\label{thm:pp_form_to_Holo_iso}
Let $(M,\omega_M)$ and $($$N$, $\omega_N$$)$ be Hermitian manifolds of finite complex dimensions $m$ and $n$ respectively, where $\omega_M$ and $\omega_N$ denote the corresponding Hermitian forms.
Let $F:(M;x_0)\to (N;F(x_0))$ be a germ of holomorphic map such that $F^*\omega_N^p = \lambda \omega_M^p$ for some real constant $\lambda>0$ and some integer $p$, $1\le p\le \min\{m,n\}$. Then, $m\le n$.

If we assume in addition that $1\le p<m$, then $F^*\omega_N=\lambda^{1\over p} \omega_M$ so that $F:(M,\lambda^{1\over p} \omega_M;x_0)\to (N,\omega_N;F(x_0))$ is a local holomorphic isometry.
\end{proposition}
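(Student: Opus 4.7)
The plan is to reduce the pointwise identity $F^*\omega_N^p = \lambda\omega_M^p$ to a statement about $p$-th compound matrices of positive semi-definite Hermitian matrices, and then exploit positivity. This approach is coordinate-local and does not use closedness of the Hermitian forms, which is why the argument of Chan--Yuan \cite{CY25} transfers verbatim from the K\"ahler setting to the Hermitian setting.

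At each $w$ near $x_0$, let $G(w)=((g_M)_{l\bar k}(w))_{l,k}$ and $H(w)=((F^*g_N)_{l\bar k}(w))_{l,k}$ be the $m\times m$ Hermitian matrices representing $\omega_M$ and $F^*\omega_N$; $G(w)$ is positive definite and $H(w)$ is positive semi-definite. Reading the expression for $\Theta_{l_1,\ldots,l_p;\overline{k_1},\ldots,\overline{k_p}}(w)$ displayed before \eqref{Eq:System_Eqs_pp} through the Cauchy--Binet identity, the coefficient matrices indexed by increasing $p$-tuples are the $p$-th compound matrices $C_p(H(w))$ on the left and $C_p(G(w))$ on the right, so \eqref{Eq:System_Eqs_pp} amounts to the matrix identity $C_p(H(w))=\lambda\,C_p(G(w))$. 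Setting $A(w):=G(w)^{-1/2} H(w) G(w)^{-1/2}$ and invoking the multiplicativity $C_p(XY)=C_p(X)C_p(Y)$, this further reduces to $C_p(A(w))=\lambda I$, with $A(w)$ positive semi-definite Hermitian.

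The crux is then an elementary compound-matrix lemma: if $A$ is an $m\times m$ positive semi-definite Hermitian matrix satisfying $C_p(A)=\lambda I_{\binom{m}{p}}$ with $\lambda>0$ and $1\le p<m$, then $A=\lambda^{1/p} I_m$. I would prove this by diagonalising $A$ with eigenvalues $a_1,\ldots,a_m\ge 0$; the diagonal entries of $C_p(A)$ are the products $a_{i_1}\cdots a_{i_p}$, so each such product must equal $\lambda>0$. No $a_i$ can vanish, and for any pair $i\neq j$ one chooses a $(p-1)$-subset $K\subset\{1,\ldots,m\}\setminus\{i,j\}$---this is exactly where the strict inequality $p<m$ enters, guaranteeing $p-1\le m-2$---and compares the products over $K\cup\{i\}$ and $K\cup\{j\}$ to deduce $a_i=a_j$. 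All eigenvalues then coincide, and their common value must be $\lambda^{1/p}$.

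Applying the lemma pointwise gives $H(w)=\lambda^{1/p} G(w)$, i.e.\ $F^*\omega_N=\lambda^{1/p}\omega_M$; in particular $F^*\omega_N$ is positive definite, so $F$ is a local holomorphic isometry of $(M,\lambda^{1/p}\omega_M)$ into $(N,\omega_N)$, which forces $m\le n$. The edge case $p=m$ of the first assertion, which is not covered by the lemma, follows directly from \eqref{Eq:General_pp} at $x_0$: positivity of $\omega_M^m(x_0)$ forces $(F^*\omega_N^m)(x_0)\ne 0$, whence $dF(x_0)$ has full rank $m$ and again $m\le n$. I expect the main obstacle to be the compound-matrix lemma, specifically pinning down the combinatorial role of $p<m$; this is precisely the hypothesis that fails for top-degree forms and is what prevents an analogous isometry conclusion in the borderline case $p=m$.
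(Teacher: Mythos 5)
Your proof is correct and follows essentially the same route as the paper (which defers to the proof of Proposition 3.1 in Chan--Yuan \cite{CY25}): reduce \eqref{Eq:System_Eqs_pp} to a pointwise linear-algebra statement about the positive semi-definite matrix of $F^*g_N$ relative to $g_M$, observe that all products of $p$ eigenvalues equal $\lambda$, and use $p<m$ to force the eigenvalues to coincide; your Cauchy--Binet/compound-matrix packaging is just a tidy way of phrasing the simultaneous-diagonalization argument, and your separate treatment of the case $p=m$ for the conclusion $m\le n$ is also in order.
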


\section{Non-existence Theorems}

\subsection{The unit sphere bundles on Hermitian manifolds}
\label{Sec:Tech_USB}
Let $(M,\omega_M)$ and $(N,\omega_N)$ be Hermitian manifolds of complex dimensions $m$ and $n$ respectively.
Let $F:(M;x_0) \to (N;F(x_0))$ be a germ of holomorphic map such that $F^*\omega_N^p = \lambda \omega_M^p$ for some integer $p$, $1\le p\le \min\{m,n\}$, and some real constant $\lambda>0$.
Write $U$ for an open neighborhood of $x_0$ in the domain of $F$ in which we have the holomorphic coordinates $(w_1,\ldots,w_m)$ on $U$.
Define the unit sphere bundles
\begin{equation}\label{Eq:USphereB1}
S_1:=\left\{(w,v)\in \wedge^p T_{U}: \lambda \omega_M^p(w)(v,\overline v) = 1\right\},\; S_2:=\left\{(z,\xi)\in \wedge^p T_N: \omega_N^p(z)(\xi,\overline\xi) = 1\right\}.
\end{equation}
The map $F$ induces the holomorphic map $f:=(F,dF): \wedge^p T_{U}\to \wedge^p T_N$ defined by $f(w,v):= (F(w),dF_w(v))$ for all $(w,v)\in \wedge^p T_{U}$.
By (\ref{Eq:General_pp}), $f$ maps $S_1$ into $S_2$. Writing $\rho_1(w,v):=\lambda \omega_M^p(w)(v,\overline v) - 1$ and $\rho_2(z,\xi):=\omega_N^p(z)(\xi,\overline\xi) - 1$, we define
\[ U_1:=\left\{(w,v)\in \wedge^p T_{U}: \rho_1(w,v)< 0 \right\},\; 
V_1:=\left\{(w,v)\in \wedge^p T_{U}: \rho_1(w,v) > 0 \right\}
 \]
and
\[ U_2:=\left\{(z,\xi)\in \wedge^p T_N: \rho_2(z,\xi) < 0 \right\},\;
 V_2:=\left\{(z,\xi)\in \wedge^p T_N: \rho_2(z,\xi) > 0 \right\}.
\]
Then, by (\ref{Eq:General_pp}) and the definitions of $\rho_1$ and $\rho_2$, we have $\rho_2\circ f= \rho_1$ so that $f=(F,dF)$ maps $U_1$ into $U_2$, and maps $V_1$ into $V_2$.

As in Chan-Yuan \cite{CY25}, we make use of the method of CR geometry to study the CR map $f:S_1\to S_2$.
We have the following non-existence theorem by arguments similar to those in Chan-Yuan \cite[Proof of Theorem 3.2]{CY25}.

\begin{theorem}\label{thm:General_M_N_curva assump}
Let $(M,\omega_M)$ and $(N,\omega_N)$ be Hermitian manifolds of complex dimensions $m$ and $n$ respectively such that $(K_M^*,\det(g_M))$ is a positive Hermitian holomorphic line bundle, where $K_X^*$ denotes the anti-canonical line bundle of any complex manifold $X$ and $g_M$ denotes the Hermitian metric of $(M,\omega_M)$. Then, we have the following.
\begin{enumerate}
\item[(a)] If $(N,\omega_N)$ is of semi-negative holomorphic bisectional curvature, then there does not exist any germ of holomorphic map $F:(M;x_0)\to (N;F(x_0))$ such that $F^*\omega_N^p=\lambda \omega_M^p$ holds, where $\lambda>0$ is a real constant and $p$ is an integer with $1\le p\le \min\{m,n\}$.
\item[(b)] If $(N,\omega_N)$ is a period domain equipped with the canonical invariant metric, then there does not exist any germ of horizontal holomorphic map $F:(M;x_0)\to (N;F(x_0))$ such that $F^*\omega_N^p=\lambda \omega_M^p$ holds, where $\lambda>0$ is a real constant and $p$ is an integer with $1\le p\le \min\{m,n\}$.
\end{enumerate}
\end{theorem}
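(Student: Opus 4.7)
The plan is to argue by contradiction within the CR-geometric framework of Section~\ref{Sec:Tech_USB}. Suppose such a germ $F$ exists, and retain $f = (F, dF)$, $\rho_1, \rho_2$. The key move is to replace $\rho_i$ by
\[ u_1 := -\log(\rho_1 + 1), \qquad u_2 := -\log(\rho_2 + 1), \]
which are defined off the zero sections since $\rho_i + 1 > 0$ there. From $\rho_2\circ f = \rho_1$ we obtain $u_2 \circ f = u_1$, hence the $(1,1)$-form identity $f^*(\partial\bar\partial u_2) = \partial\bar\partial u_1$. The contradiction will be that the right-hand side is strictly positive on base directions while the left-hand side is everywhere $\le 0$.

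First I would reduce to the case $p = m$. For $1 \le p < m$, Proposition~\ref{thm:pp_form_to_Holo_iso} promotes the hypothesis to $F^*\omega_N = \lambda^{1/p}\omega_M$, and the $m$-th exterior power yields $F^*\omega_N^m = \lambda^{m/p}\omega_M^m$; the horizontality assumption in Part~(b) survives this replacement. With $p = m \le n$ and a local trivialization $v = v^0\,\partial_{w_1}\wedge\cdots\wedge\partial_{w_m}$ of the line bundle $\wedge^m T_U = K_M^*|_U$, the quantity $\omega_M^m(v,\bar v)$ equals $|v^0|^2 \det g_M(w)$ up to a constant, so
\[ \partial\bar\partial u_1 = -\partial\bar\partial \log \det g_M = \Theta_{K_M^*} \]
as a $(1,1)$-form pulled back from $M$, which is strictly positive in base directions by the positivity hypothesis on $(K_M^*, \det g_M)$.

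On the $N$-side, compute $\partial\bar\partial u_2$ at a fiber-normalized point $(z_0, \xi_0)$ using a normal Hermitian frame for the induced metric on $\wedge^m T_N$. The Hessian is block-diagonal: the fiber block is the standard Fubini--Study-type form $-\delta_{IJ} + \bar\xi_0^I \xi_0^J$, negative semi-definite by Cauchy--Schwarz; the base block equals $\Theta_{\wedge^m T_N}$ contracted with $\xi_0$. Since $v_0$ lies in the line bundle $\wedge^m T_M$, it is automatically decomposable, and hence so is $\xi_0 = dF(v_0) = v^0\,(dF\,\partial_{w_1})\wedge\cdots\wedge(dF\,\partial_{w_m})$. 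For such decomposable $\xi_0$, the curvature of $\wedge^m T_N$ along $\xi_0$ expands as a sum of bisectional curvatures of $(T_N, g_N)$ evaluated on the factors $dF\,\partial_{w_k}$. In Part~(a) this sum is nonpositive by the semi-negative bisectional curvature hypothesis; in Part~(b), horizontality of $F$ confines the $dF\,\partial_{w_k}$ to the horizontal subspace, along which the period-domain curvature formula gives bisectional curvature $\le 0$. Hence $f^*(\partial\bar\partial u_2) \le 0$, contradicting the strict positivity of $\partial\bar\partial u_1$ in base directions.

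The hardest step is the sign transfer on the $N$-side: $\wedge^p$ does not generally preserve Griffiths semi-negativity. What rescues the argument is that the reduction to $p = m$ forces $v_0$, and hence $\xi_0 = dF(v_0)$, to be decomposable, so the exterior-power curvature decomposes into bisectional curvatures of $T_N$ that the hypothesis (resp.\ the horizontal restriction of the period-domain curvature in Part~(b)) directly controls.
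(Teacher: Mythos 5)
Your Part (a) is sound and runs essentially parallel to the paper's argument, in a slightly different packaging: the paper compares Levi forms of the unit sphere bundles $S_1$, $S_2$ through $\rho_2\circ f=\rho_1$ (invoking Chan--Yuan's Lemma 2.4 and weak pseudoconvexity of $S_2$), whereas you compare the full complex Hessians of the potentials $u_i=-\log(\rho_i+1)$ on the total spaces of $\wedge^m T_U$ and $\wedge^m T_N$ minus their zero sections. The substantive inputs are the same as the paper's: the reduction to $p=m$ via Proposition \ref{thm:pp_form_to_Holo_iso}, the identification $\partial\overline\partial u_1$ with the (pulled-back) curvature of $(K_M^*,\det g_M)$ since $\log|v^0|^2$ is pluriharmonic off the zero section, and the fact that $\xi_0=dF(v_0)$ is decomposable so that the curvature of $\wedge^m T_N$ along $\xi_0$ splits into a sum of bisectional curvatures of $(T_N,g_N)$. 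Your formulation even has the minor advantage of only needing semi-negativity of the Hessian of $u_2$ at decomposable points, rather than pseudoconvexity of $S_2$ at every point.

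Part (b), however, has a genuine gap: you assert that along the horizontal subspace of a period domain ``the period-domain curvature formula gives bisectional curvature $\le 0$.'' That is not justified, and is not true for arbitrary pairs of horizontal directions: the Griffiths--Schmid estimates control holomorphic sectional curvature along horizontal directions, but semi-negativity of the bisectional curvature requires an additional algebraic condition on the pair of directions. The missing chain, which the paper supplies, is that since $F$ is horizontal its image is an integral manifold of the horizontal distribution, so by Carlson--Toledo \cite[Proposition 5.3]{CT1988} the tangent spaces $dF_x(T_x(M))$ are abelian, i.e.\ $[dF_x(T_x(M)),dF_x(T_x(M))]=0$, and only then does Peters \cite[Corollary 1.8]{Pet1990} give $R_{v\bar v\xi\bar\xi}(N,\omega_N)\le 0$ for $v,\xi\in dF_x(T_x(M))$. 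With that step inserted (note it is exactly the directions $dF(\partial_{w_k})$ that enter your decomposable-curvature expansion), your argument for (b) goes through just as in (a).
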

\begin{proof}
Assume the contrary that there exists a germ of holomorphic map $F:(M;x_0) \to (N;F(x_0))$ such that $F^*\omega_N^p = \lambda \omega_M^p$ holds.
Write $U\subset M$ for the domain of $F$.
Then, in the above settings we have the CR map $f:=(F,dF):S_1\to S_2$ between the unit sphere bundles $S_1$ and $S_2$ defined in (\ref{Eq:USphereB1}), and we write $\rho_1$ and $\rho_2$ for the defining functions of $S_1$ and $S_2$ respectively as in the above.
We have $\rho_2\circ f = \rho_1$, by $F^*\omega_N^p=\lambda \omega_M^p$.
Fix a point $(w_0,v_0)\in S_1$.
By Chan-Yuan \cite[Lemma 2.4]{CY25}, for any $\eta,\eta'\in T^{1,0}_{(w_0,v_0)}(S_1)$, we have
\begin{equation}\label{Eq:Levi form_General_M_N_curva assump}
 (\partial \overline\partial \rho_2)(f(w_0,v_0))(f_*(\eta),\overline{f_*(\eta')}) 
= (\partial \overline\partial\rho_1) (w_0,v_0)(\eta,\overline{\eta'}).
\end{equation}
We consider Case {\rm(a)}.
By computations similar to those in Chan-Yuan \cite[Section 2.3]{CY25} and Yuan \cite{Yu17}, since $(N,\omega_N)$ is of semi-negative holomorphic bisectional curvature, $S_2$ is weakly pseudoconvex.
By the identity $F^*\omega_N^p = \lambda \omega_M^p$, $F$ is of rank $\ge p$ at every point in $U$. 
Thus, we may choose a vector $\eta=(\eta_1,{\bf 0})\in T^{1,0}_{(w_0,v_0)}(S_1)$ with $\eta_1\in T_{w_0}(U)$ and $f_*(\eta)=(dF_{w_0}(\eta_1),\eta')\neq {\bf 0}$.
If $1\le p<m$, then by Proposition \ref{thm:pp_form_to_Holo_iso} $F$ would be a germ of holomorphic isometry, i.e., $F^*\omega_N=\lambda^{1\over p} \omega_M$, and thus $F^*\omega_N^m=\lambda^{m\over p} \omega_M^m$.
Therefore, we may assume without loss of generality that $p=m$.
Then, $F:U \to N$ is a holomorphic immersion.
By the assumption on the anti-canonical line bundle $K_M^*$, $(K_U^*,\det(g_M)|_{K_U^*})$ is a positive Hermitian \text{holomorphic} line bundle over $U$, and thus by computations similar to those in Chan-Yuan \cite{CY25} we have
\[ (\partial\overline\partial \rho_1) (w_0,v_0)(\eta,\overline{\eta})
= -C\Theta_{K_U^*}(\eta_1,\overline{\eta_1},v_0,\overline{v_0}) < 0 \]
for some real constant $C>0$.
On the other hand, $(\partial \overline\partial \rho_2)(f(w_0,v_0))(f_*(\eta),\overline{f_*(\eta)})\ge 0$ since $S_2$ is weakly pseudoconvex and $f_*(\eta)\neq {\bf 0}$.
This clearly contradicts with (\ref{Eq:Levi form_General_M_N_curva assump}), and the proof of the first assertion is complete.

Now, we consider Case {\rm(b)}, i.e., $(N,\omega_N)$ is a period domain equipped with the canonical invariant metric and $F$ is a horizontal holomorphic map, i.e., $F: U \to N$ is a holomorphic map such that $dF_x(T_x(M))$ lies inside the horizontal tangent space $T^h_{F(x)}(N)\subset T_{F(x)}(N)$ for all $x\in U$.
As in Case {\rm(a)}, we may assume without loss of generality that $p=m$ so that $F^*\omega_N^m=\lambda \omega_M^m$ holds, and thus $F$ is a holomorphic immersion.
Then, $F(U)\subset N$ is an integral submanifold for the horizontal distribution on the period domain $N$.
By a result of Carlson-Toledo \cite[Proposition 5.3]{CT1988}, we have $[dF_x(T_x(M)),dF_x(T_x(M))]=0$ for $x\in U$.
It follows from Peters \cite[Corollary 1.8]{Pet1990} that the holomorphic bisectional curvature $R_{v\bar v \xi \bar\xi}(N,\omega_N)\le 0$ along the directions $v,\xi\in dF_x(T_x(M))$, $x\in U$.
Hence, we still have $(\partial \overline\partial \rho_2)(f(w_0,v_0))(f_*(\eta),\overline{f_*(\eta)})$ $\ge$ $0$.
By the same argument as in the above, this contradicts with (\ref{Eq:Levi form_General_M_N_curva assump}), and the proof of the second assertion is complete.
\end{proof}

\begin{corollary}\label{Cor:Nonexist_mfd_+anti-can_to_BSD}
Let $(M,\omega_M)$ be a Hermitian manifold such that $(K_M^*,\det(g_M))$ is a positive Hermitian holomorphic line bundle.
Write $m:=\dim_{\mathbb C}(M)$.
Let $\Omega \Subset \mathbb C^n$ be a bounded symmetric domain with $\omega_\Omega$ being the K\"ahler form of the canonical K\"ahler metric $\sum_{j=1}^k \lambda_j {\rm Pr}_j^* g_{\Omega_j}$, where $\Omega_j$, $1\le j\le k$, are the irreducible factors of $\Omega$, ${\rm Pr}_j:\Omega \to \Omega_j$ is the canonical projection onto the $j$-th irreducible factor $\Omega_j$, and $\lambda_j>0$, $1\le j\le k$.
Then, there does not exist any germ of holomorphic map $F:(M;x_0)\to (\Omega; F(x_0))$ such that $F^*\omega_{\Omega}^p=\lambda \omega_M^p$ holds, where $\lambda>0$ is a real constant and $p$ is an integer with $1\le p\le \min\{m,n\}$.
\end{corollary}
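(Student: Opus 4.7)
The plan is to derive this corollary as an immediate application of Theorem~\ref{thm:General_M_N_curva assump}(a). The hypothesis on $(K_M^*,\det(g_M))$ being a positive Hermitian holomorphic line bundle is already built into the statement of the corollary, so the only thing I need to check is that the target $(\Omega,\omega_\Omega)$ is a K\"ahler manifold of semi-negative holomorphic bisectional curvature.

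First I would verify this factor-by-factor. Each $(\Omega_j,g_{\Omega_j})$ is an irreducible bounded symmetric domain equipped with its canonical K\"ahler-Einstein metric, hence a Hermitian symmetric space of non-compact type; it is classical that such a space has non-positive holomorphic bisectional curvature. Multiplying the metric by the positive constant $\lambda_j$ does not alter the sign of the bisectional curvature, so each rescaled factor $(\Omega_j,\lambda_j g_{\Omega_j})$ retains this property.

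Next I would pass from the individual factors to the whole of $(\Omega,\omega_\Omega)=\bigl(\prod_j \Omega_j,\sum_j \lambda_j {\rm Pr}_j^* g_{\Omega_j}\bigr)$. For any K\"ahler product, the curvature tensor has a blockwise form: the components mixing tangent directions from distinct factors vanish identically. Consequently, for any $v,\xi \in T^{1,0}(\Omega)$ at a point, one obtains $R_{v\overline v\xi\overline\xi}(\Omega,\omega_\Omega)=\sum_{j=1}^k R_{v_j\overline{v_j}\xi_j\overline{\xi_j}}(\Omega_j,\lambda_j g_{\Omega_j})\le 0$, where $v_j,\xi_j$ denote the projections of $v,\xi$ to the $j$-th irreducible factor. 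Thus $(\Omega,\omega_\Omega)$ has semi-negative holomorphic bisectional curvature.

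With both hypotheses of Theorem~\ref{thm:General_M_N_curva assump}(a) in hand (taking $N=\Omega$ and $\omega_N=\omega_\Omega$), the non-existence of any germ $F:(M;x_0)\to(\Omega;F(x_0))$ with $F^*\omega_\Omega^p=\lambda\omega_M^p$ follows at once. I do not anticipate a serious obstacle here: the only substantive step is the blockwise decomposition of the curvature tensor of the K\"ahler product metric, combined with the classical non-positivity of the holomorphic bisectional curvature of an irreducible bounded symmetric domain with its canonical K\"ahler-Einstein metric.
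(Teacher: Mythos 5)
Your proposal is correct and follows exactly the paper's route: the paper's proof is the one-line observation that $(\Omega,\omega_\Omega)$ has semi-negative holomorphic bisectional curvature and then invokes Theorem~\ref{thm:General_M_N_curva assump}(a). You merely spell out the curvature verification (blockwise product structure plus non-positivity on each irreducible factor) that the paper treats as clear.
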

\begin{proof}
It is clear that $(\Omega,\omega_\Omega)$ is of semi-negative holomorphic bisectional curvature, and thus the result follows from Theorem \ref{thm:General_M_N_curva assump}.
\end{proof}

\subsection{The Umehara algebra on bounded symmetric domains and its applications}

We have the following basic fact on Umehara algebra.
\begin{lemma}\label{lem:det_BSD_not in UmAlg}
Let $D\Subset \mathbb C^m$ be a bounded symmetric domain equipped with the canonical K\"ahler metric $g'_D:=\sum_{j=1}^k \lambda_j {\rm Pr}_j^* g_{D_j}$ for some $\lambda_j>0$, $1\le j\le k$, where $D_j$, $1\le j\le k$ are the irreducible factors of $D$, and ${\rm Pr}_j:D \to D_j$ is the canonical projection onto the $j$-th irreducible factor $D_j$, $1\le j\le k$.
Write $\omega_D$ for the K\"ahler form of $(D,g'_D)$ and $m_j:=\dim_{\mathbb C}(D_j)$.
The real-analytic function
\[ \phi_m(w):=\det\left( (g'_D)_{s\bar{t}}(w)\right)_{1\le s,t\le m} \]
on $D$ does not lie inside the Umehara algebra $\Lambda(D)$.
\end{lemma}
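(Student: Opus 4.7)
My plan is to argue by contradiction via the polarization/finite-rank characterization of membership in the Umehara algebra.

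\emph{Step 1: block-diagonal factorization.} Since $g'_D = \sum_{j=1}^k \lambda_j\, {\rm Pr}_j^* g_{D_j}$ is block-diagonal along $D = D_1\times\cdots\times D_k$, we have
\[
\phi_m(w) = C_0 \cdot \prod_{j=1}^k \psi_j\bigl({\rm Pr}_j(w)\bigr), \qquad \psi_j := \det\bigl((g_{D_j})_{s\overline{t}}\bigr)_{1\le s,t\le m_j}.
\]
For each irreducible factor $D_j$, the K\"ahler--Einstein condition ${\rm Ric}(g_{D_j}) = -\kappa_j\, g_{D_j}$ (combined with ${\rm Aut}(D_j)$-transitivity) identifies $\psi_j$ as $c_j\, N_{D_j}^{-\gamma_j}$ for some positive constants $c_j, \gamma_j$, where $N_{D_j}(w_j, \overline{w_j})$ is the generic-norm polynomial of $D_j$ in its Harish--Chandra realization, normalized so that $N_{D_j}(\cdot, 0) \equiv 1$. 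Hence
\[
\phi_m(w) = C\cdot \prod_{j=1}^k N_{D_j}\bigl({\rm Pr}_j(w), \overline{{\rm Pr}_j(w)}\bigr)^{-\gamma_j}.
\]

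\emph{Step 2: polarization and contradiction.} Suppose for contradiction that $\phi_m \in \Lambda(D)$, and write $\phi_m = \sum_{i=1}^N (f_i \overline{g_i} + g_i \overline{f_i})$ for finitely many holomorphic $f_i, g_i \in \mathcal{O}(D)$. The $\overline{w}$-polarization
\[
\widetilde{\phi_m}(w, \overline{\zeta}) := \sum_{i=1}^N \bigl(f_i(w) \overline{g_i(\zeta)} + g_i(w) \overline{f_i(\zeta)}\bigr)
\]
is holomorphic in $w$ and antiholomorphic in $\zeta$, and its slice family $\{w \mapsto \widetilde{\phi_m}(w, \overline{\zeta}) : \zeta \in D\}$ spans at most a $2N$-dimensional subspace of $\mathcal{O}(D)$. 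By uniqueness of analytic continuation from the diagonal $\{\zeta = w\}$, this kernel must coincide with $C \prod_{j=1}^k N_{D_j}({\rm Pr}_j(w), \overline{{\rm Pr}_j(\zeta)})^{-\gamma_j}$. Fixing ${\rm Pr}_j(\zeta) = 0$ for all $j \ne 1$ and using $N_{D_j}(\cdot, 0) \equiv 1$, the slice family restricts to $\{w \mapsto N_{D_1}({\rm Pr}_1(w), \overline{\zeta_1})^{-\gamma_1}\}_{\zeta_1 \in D_1}$. The Taylor expansion of a negative power of the non-constant polynomial $N_{D_1}$ in the anti-holomorphic variable around $0$ produces infinitely many linearly independent holomorphic coefficient-functions in $w$, so this restricted family already spans an infinite-dimensional subspace of $\mathcal{O}(D)$ -- a contradiction.

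The main obstacle is the explicit identification $\psi_j = c_j N_{D_j}^{-\gamma_j}$ with $\gamma_j > 0$; once this is in place, the contradiction reduces to a routine infinite-rank/reproducing-kernel-type argument. The irreducible-case non-membership $N_{D_1}^{-\gamma_1} \notin \Lambda(D_1)$ is of the same flavor as the analogous statement exploited in Chan--Yuan \cite{CY25} and is classical (cf.\,Umehara \cite{Um88}).
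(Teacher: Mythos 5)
Your proof is correct in substance but follows a genuinely different route from the paper's. The paper restricts first: using that membership in $\Lambda(D)$ is preserved under restriction to complex submanifolds, it pulls $\phi_m$ back to the minimal disk $\Delta_0=D\cap\{w_2=\cdots=w_m=0\}$ in Harish--Chandra coordinates, computes the full $m\times m$ metric matrix along that slice explicitly from the potentials $\rho_j=1-\lVert z^j\rVert^2+\sum_l(-1)^{\deg G^j_l}|G^j_l|^2$ (it is block-diagonal there, with entries $\lambda_1(1-|\zeta|^2)^{-2}$, $\lambda_1(1-|\zeta|^2)^{-1}{\bf I}_{m_1-1}$ and constants), obtains $\widetilde\phi_m(\zeta)=\lambda_1^{m_1}\cdots\lambda_k^{m_k}(1-|\zeta|^2)^{-(m_1+1)}$, and invokes Umehara's theorem that this is not in $\Lambda(\Delta)$. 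You instead identify $\det(g'_D)$ globally as $C\prod_j N_{D_j}^{-\gamma_j}$ via the K\"ahler--Einstein/Monge--Amp\`ere equation and then run a polarization/finite-rank argument. Your route is more conceptual and avoids the slice computation of the metric matrix, at the cost of two facts you should justify: (i) the identity $\det((g_{D_j})_{s\bar t})=c_jN_{D_j}^{-\gamma_j}$ with $\gamma_j>0$ (the KE equation gives that $\log\det(g_{D_j})+\gamma_j\log N_{D_j}$ is pluriharmonic, and invariance under the isotropy action at the origin, which contains the circle action, forces it to be constant); and (ii) your closing claim that a negative power of a ``non-constant polynomial'' automatically yields infinitely many independent coefficient functions is stated too loosely as a general principle --- the clean way to close it is to restrict $w$ and $\zeta$ to a minimal disk of $D_1$, where $N_{D_1}(w,\bar\zeta)=1-w\bar\zeta$ and $(1-w\bar\zeta)^{-\gamma_1}$ visibly has infinite rank since $\gamma_1>0$. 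With that last restriction your argument and the paper's converge on the same one-variable fact, so the two proofs differ mainly in where the reduction to the disk happens: at the outset (paper) or at the very end (yours).
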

\begin{proof}
Assume the contrary that $\phi_m\in \Lambda(D)$.
We choose the Harish-Chandra coordinates $($$w_1$,$\ldots$, $w_m$$)$ $\in$ $\mathbb C^m$ on the bounded symmetric domain $D$ so that $D\cap \{(w_1,\ldots,w_m)\in \mathbb C^m: w_j=0\text{ for } j=2,\ldots,m\}$ is the minimal disk $\Delta_0$.
By restricting to $\Delta_0$ we obtain a function
$\widetilde\phi_m(\zeta):=\phi_m(\zeta,0,\ldots,0)$
for $\zeta\in \{w_1\in \mathbb C: (w_1,0\ldots,0)\in D\} = \{w_1\in \mathbb C:|w_1|^2<1\}=:\Delta$.
Then, we would have $\widetilde\phi_m\in \Lambda(\Delta)$ by the assumption.
We put $(z^1,\ldots,z^k)=(w_1,\ldots,w_m)$ with $z^j=(z^j_1,\ldots,z^j_{m_j})$ for $1\le j\le k$.
We have
\[ \omega_D
=-\sqrt{-1}\sum_{j=1}^k \lambda_j \partial\overline\partial \log \rho_j \]
where
\[ \rho_j = 1-\sum_{\mu=1}^{m_j} |z^j_\mu|^2 + \sum_{l=1}^{N'_j} (-1)^{\deg G^j_l} |G^j_l(z^j)|^2 \]
for some homogeneous polynomials $G^j_l$, $1\le l\le N'_j$, of degree $\ge 2$ (cf.\,Chan-Xiao-Yuan \cite[Section 4]{CXY17}).
By using Mok \cite[Lemma 3 and Lemma 5]{Mo14}, we compute
\[ \begin{tiny}
\widetilde\phi_m(\zeta)=
\det\begin{bmatrix}
{\lambda_1 \over (1-|\zeta |^2)^{2}} & & & & & \\
 &{\lambda_1\over 1-|\zeta|^2}{\bf I}_{p_1}  & & & & \\
 & & \lambda_1{\bf I}_{m_1-p_1-1} & &  &\\
 & & &  \lambda_2 {\bf I}_{m_2} & & \\
 & & & & \ddots & \\
 & & & & & \lambda_k {\bf I}_{m_k} 
\end{bmatrix}
={\lambda_1^{m_1}\cdots\lambda_k^{m_k}\over (1-|\zeta |^2)^{p_1+2}}, 
\end{tiny}\]
which does not lie in $\Lambda(\Delta)$ by Umehara \cite{Um88}, a plain contradiction.
Here, $p_1$ is a non-negative integer depending only on $D_1$.
Hence, $\phi_m\not\in \Lambda(D)$.
\end{proof}

By using Lemma \ref{lem:det_BSD_not in UmAlg}, we have the following non-existence result by using the Umehara algebra.

\begin{proposition}\label{pro:BSD_Proj_pp form}
Let $D\Subset \mathbb C^m$ be a bounded symmetric domain equipped with the canonical K\"ahler metric $g'_D:=\sum_{j=1}^k \lambda_j {\rm Pr}_j^* g_{D_j}$ for some $\lambda_j>0$, $1\le j\le k$, where $D_j$, $1\le j\le k$ are the irreducible factors of $D$, and ${\rm Pr}_j:D \to D_j$ is the canonical projection onto the $j$-th irreducible factor $D_j$, $1\le j\le k$.
Write $\omega_D$ for the K\"ahler form of $(D,g'_D)$ and $m_j:=\dim_{\mathbb C}(D_j)$.
Let $X$ be a projective manifold of complex dimension $n$ equipped with the K\"ahler metric $g_X$ such that there exists a holomorphic isometric embedding $(X,g_X)\hookrightarrow (\mathbb P^{N_1},\mu_1 g_{\mathbb P^{N_1}})\times \cdots \times (\mathbb P^{N_l},\mu_l g_{\mathbb P^{N_l}})$ for some real constants $\mu_j>0$, $1\le j\le l$, and some integers $N_j\ge 1$, $1\le j\le  l$, where $l$ is a positive integer.
Then, for any $p$, $1\le p \le \min\{m,n\}$, there does not exist a germ of holomorphic map $F:(D;x_0)\to (X;F(x_0))$ such that 
\begin{equation}\label{Eq:BSD_to_Proj_pp form}
F^*\omega_{X}^p  = \lambda \omega_{D}^p
\end{equation}
for some real constant $\lambda>0$.
\end{proposition}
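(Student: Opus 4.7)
The plan is to reduce to the top-degree case via Proposition \ref{thm:pp_form_to_Holo_iso}, extract from the resulting identity a Umehara-algebra relation on $D$, and contradict Lemma \ref{lem:det_BSD_not in UmAlg} after restriction to the minimal disk.

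First, if $1 \le p < m$, Proposition \ref{thm:pp_form_to_Holo_iso} yields $F^*\omega_X = \lambda^{1/p}\omega_D$, so taking $m$-th exterior powers gives $F^*\omega_X^m = \lambda^{m/p}\omega_D^m$; if $p = m$, the hypothesis already gives the top-form identity. Writing $\lambda' > 0$ for the new constant, I may assume $F^*\omega_X^m = \lambda'\omega_D^m$. Composing with the K\"ahler isometric embedding $\iota$, set $G := \iota \circ F : (D;x_0) \to (Y;G(x_0))$ where $Y := \prod_{j=1}^l \mathbb{P}^{N_j}$ carries $\omega_Y := \sum_j \mu_j\,\mathrm{pr}_j^*\omega_{\mathbb{P}^{N_j}}$; then $G^*\omega_Y^m = \lambda'\omega_D^m$ on a neighborhood of $x_0$.

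After composing each $G^{(j)} := \mathrm{pr}_j \circ G$ with an automorphism of $\mathbb{P}^{N_j}$, we may assume $G^{(j)}(x_0) = 0$ in a standard affine chart, so that $\omega_{\mathbb{P}^{N_j}} = \sqrt{-1}\,\partial\bar\partial\log(1+|z^j|^2)$. An explicit computation then shows that each entry of $(G^{(j)*}g_{\mathbb{P}^{N_j}})_{s\bar t}$ is a rational function whose denominator is $(1+|G^{(j)}|^2)^2$ and whose numerator lies in $\Lambda(D)$. Writing the top-form equation in coordinates as $\det((G^*g_Y)_{s\bar t})(w) = \lambda'\phi_m(w)$ (with $\phi_m$ as in Lemma \ref{lem:det_BSD_not in UmAlg}) and clearing denominators in the multinomial expansion of the determinant, I would extract positive integers $a_1,\ldots,a_l$ such that $\left(\prod_j (1+|G^{(j)}(w)|^2)^{a_j}\right)\det((G^*g_Y)_{s\bar t})(w) \in \Lambda(D)$, whence
\[
\phi_m(w) \cdot \prod_{j=1}^l(1+|G^{(j)}(w)|^2)^{a_j} \in \Lambda(D).
\]

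Finally, restrict to the minimal disk $\Delta_0 = D \cap \{w_2 = \cdots = w_m = 0\}$ as in the proof of Lemma \ref{lem:det_BSD_not in UmAlg}, and set $\widetilde G^{(j)}(\zeta) := G^{(j)}(\zeta,0,\ldots,0)$. Using the explicit form $\widetilde\phi_m(\zeta) = \lambda_1^{m_1}\cdots\lambda_k^{m_k}/(1-|\zeta|^2)^{m_1+1}$ from that proof, this gives
\[
\frac{\lambda_1^{m_1}\cdots\lambda_k^{m_k}}{(1-|\zeta|^2)^{m_1+1}} \cdot \prod_{j=1}^l(1+|\widetilde G^{(j)}(\zeta)|^2)^{a_j} \in \Lambda(\Delta).
\]
Since each $\widetilde G^{(j)}(0) = 0$, the second factor is a real-analytic germ with value $1$ at $\zeta = 0$. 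A refinement of the Hessian-rank argument from Umehara \cite{Um88} that was invoked in Lemma \ref{lem:det_BSD_not in UmAlg} then shows that this product still fails to lie in $\Lambda(\Delta)$, producing the desired contradiction.

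The main obstacle is this final step: Lemma \ref{lem:det_BSD_not in UmAlg} rules out $\phi_m$ alone from $\Lambda(D)$, but here I must exclude a product $\phi_m \cdot \prod_j(1+|G^{(j)}|^2)^{a_j}$ with a genuinely nontrivial $\Lambda(D)$-factor. Because that factor is a real-analytic unit at the base point, it cannot (in the Umehara sense) collapse the infinite Hessian rank of $1/(1-|\zeta|^2)^{m_1+1}$ to a finite one; verifying this rigorously via convolution of Taylor coefficients is the most delicate technical point of the argument.
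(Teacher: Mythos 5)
Your proposal follows essentially the same route as the paper's proof: reduce to $p=m$ via Proposition \ref{thm:pp_form_to_Holo_iso}, pass to the product of projective spaces through the isometric embedding, clear the denominators $(1+\lVert F^j\rVert^2)^{2m}$ in the determinant identity to obtain a Umehara-algebra membership, and restrict to the minimal disk to contradict the explicit form of $\phi_m$ from Lemma \ref{lem:det_BSD_not in UmAlg}. The final step you flag as delicate --- that $(1-|\zeta|^2)^{-(m_1+1)}\prod_j(1+\lVert F^j(\zeta,{\bf 0})\rVert^2)^{a_j}$ has infinite rank and hence lies outside the Umehara algebra --- is exactly the point the paper settles by invoking the argument of Chan-Yuan \cite[Proof of Proposition 3.7, p.\,635]{CY25} (and note the memberships should be stated in $\Lambda(U')$, $\Lambda(U'')$ for neighborhoods of the base point rather than $\Lambda(D)$, $\Lambda(\Delta)$, since $F$ is only a germ).
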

\begin{proof}
Assume the contrary that there exists such a germ of holomorphic map $F$ such that (\ref{Eq:BSD_to_Proj_pp form}) holds.
The existence of such a map $F$ would imply the existence of a germ of holomorphic map $\hat F$ from $D$ to $Y:=\mathbb P^{N_1}\times \cdots \times \mathbb P^{N_l}$ such that $\hat F^*\omega_{Y}^p  = \lambda \omega_{D}^p$, where $\omega_{Y}$ denotes the K\"ahler form of $(\mathbb P^{N_1},\mu_1 g_{\mathbb P^{N_1}})\times \cdots \times (\mathbb P^{N_l},\mu_l g_{\mathbb P^{N_l}})$.
Thus, from now on we may assume $(X,g_X)=(\mathbb P^{N_1},\mu_1 g_{\mathbb P^{N_1}})\times \cdots \times (\mathbb P^{N_l},\mu_l g_{\mathbb P^{N_l}})$ and identify $X=Y=\mathbb P^{N_1}\times \cdots \times \mathbb P^{N_l}$.
By Proposition \ref{thm:pp_form_to_Holo_iso}, if $1\le p<m$, then $F$ would be a local holomorphic isometry from $(D,\lambda^{1\over p} g'_D)$ to $(X,g_X)=(\mathbb P^{N_1},\mu_1 g_{\mathbb P^{N_1}})\times \cdots \times (\mathbb P^{N_l},\mu_l g_{\mathbb P^{N_l}})$, i.e., $F^*\omega_X = \lambda^{1\over p} \omega_D$, and thus we may take the $m$-th exterior power to get $F^*\omega_X^m = \lambda^{m\over p} \omega_D^m$.
Therefore, we may assume without loss of generality that $p=m=\dim_{\mathbb C}(D)$.

Let $U\subset D$ be the domain of $F$.
Write $F=(F^1,\ldots,F^l)$ so that $F^j$ maps $U$ into $\mathbb P^{N_j}$ for $1\le j\le l$.
We may assume $x_0={\bf 0}$ and $F^j(x_0)=[1,0,\ldots,0]$ for $1\le j\le l$ without loss of generality by composing $F$ with holomorphic isometries of $(D,g'_D)$ and $(\mathbb P^{N_1},\mu_1 g_{\mathbb P^{N_1}})\times \cdots \times (\mathbb P^{N_l},\mu_l g_{\mathbb P^{N_l}})$.
We also write $N:=\sum_{j=1}^l N_j$ and $F=(F_1,\ldots,F_N)$ for convention.
Now, (\ref{Eq:BSD_to_Proj_pp form}) is equivalent to
\begin{equation}\label{Eq:D_to_X_in_product_PNs_preserve_pp1}
\begin{split}
&\sum_{I,J\in {\bf A}}\det( (g_{X})_{i_s\overline{j_t}}(F(w)) )_{1\le s,t\le p}
\det\left({\partial (F_{i_1},\ldots,F_{i_p})\over \partial(w_{l_1},\ldots,w_{l_p})}\right)
\det\left(\overline{{\partial (F_{j_1},\ldots,F_{j_p})\over \partial(w_{k_1},\ldots,w_{k_p})}}\right)\\
 =& \lambda \det\left( (g'_D)_{l_s\overline{k_t}}(w) \right)_{1\le s,t\le p} \end{split}
\end{equation}
for $1\le l_1<\cdots<l_p\le m$ and $1\le k_1<\cdots<k_p\le m$.
Note that
\[ \begin{pmatrix} (g_{X})_{i\overline{j}}(F(w)) \end{pmatrix}_{1\le i,j\le N}
= \begin{bmatrix}
M_1(w) & & \\
 & \ddots & \\
 & & M_l(w)
\end{bmatrix}, \]
where $M_j(w):=\mu_j \begin{pmatrix} (g_{\mathbb P^{N_j}})_{s\overline{t}}(F^j(w)) \end{pmatrix}_{1\le s,t\le N_j}$ for $1\le j\le l$.
By similar observation in Chan-Yuan \cite[Proof of Proposition 3.7]{CY25} and (\ref{Eq:D_to_X_in_product_PNs_preserve_pp1}), we have
\[ \left(\prod_{j=1}^l (1+\lVert F^j(w)\rVert^2)^{2p} \right)\cdot \det\left( (g'_D)_{l_s\overline{k_t}}(w)\right)_{1\le s,t\le p}\in \Lambda(U') \]
for $1\le l_1<\cdots<l_p\le m$ and $1\le k_1<\cdots<k_p\le m$, where $U'$ is a simply connected open neighborhood of ${\bf 0}$ in $U$ and $\lVert F^j(w)\rVert$ denotes the standard complex Euclidean norm of $F^j(w)$ in $\mathbb C^{N_j}$.
Let $U''$ be a simply connected open neighborhood of ${\bf 0}$ in $\{\zeta\in \mathbb C: (\zeta,{\bf 0})\in U'\}$.
Now, we put $p=m$.
By restricting to the minimal disk $\Delta_0:=D\cap \{(w_1,\ldots,w_m)\in \mathbb C^m: w_j=0\text{ for } j=2,\ldots,m\}$, we have $\phi_m(\zeta,{\bf 0})={\lambda_1^{m_1}\cdots \lambda_k^{m_k}\over (1-|\zeta|^2)^{p_1+2}}$ from the proof of Lemma \ref{lem:det_BSD_not in UmAlg}, and thus by Chan-Yuan \cite[Proof of Proposition 3.7]{CY25} we have
\[ \psi_m(\zeta):={\prod_{j=1}^l (1+\lVert F^j(\zeta,{\bf 0})\rVert^2)^{2m}\over (1-|\zeta|^2)^{p_1+2}} \in \Lambda(U''). \]
It remains to show that $\psi_m\not\in \Lambda(U'')$ in order to prove the non-existence of such a germ of holomorphic map $F$.
Actually, by the same arguments in Chan-Yuan \cite[Proof of Proposition 3.7,\;p.\,635]{CY25}, $\psi_m(\zeta)$ would be of infinite rank so that $\psi_m\not\in \Lambda(U'')$. This leads to a contradiction, and thus we conclude that there does not exist such a germ of holomorphic map $F$.
\end{proof}
\begin{remark}
One may ask if this non-existence theorem holds for a rational homogeneous manifold $X\cong G/P$ if we replace the restricted metric $g_X$ on $X \subset \mathbb P^N$ by the $G$-invariant canonical metric $h$ on $X$. The issue is that we do not have much information on the K\"ahler form of $(X,h)$ for checking things related to the Umehara algebra.
\end{remark}

\subsection{Local holomorphic maps between Hermitian symmetric manifolds of the compact type and bounded symmetric domains}
Let $X$ be a Hermitian symmetric manifold of the compact type.
Write $X=X_1\times\cdots \times X_l$, where each $(X_j,g_j)$, $1\le j\le l$, is an irreducible Hermitian symmetric manifold of the compact type whose maximum holomorphic sectional curvature equals $2$.
A K\"ahler metric $g_X$ on $X$ is said to be a \emph{canonical K\"ahler metric} if $g_X=\sum_{j=1}^l \mu_j {\rm Pr}_j^* g_j$ for some real constants $\mu_j>0$, $1\le j\le l$, where ${\rm Pr}_j:X\to X_j$ denotes the canonical projection onto the $j$-th irreducible factor $X_j$.
We fix such a canonical K\"ahler metric $g_X$ on $X$.
By the Nakagawa-Takagi theorem (cf.\,Mok \cite[Theorem 1, p.\,135]{Mo89}), there exists a holomorphic isometric embedding $\iota_j: (X_j,g_j) \hookrightarrow (\mathbb P^{N_j},g_{\mathbb P^{N_j}})$ for $1\le j\le l$.
Hence, we have a holomorphic isometric embedding $\iota:(X,g_X) \hookrightarrow (\mathbb P^{N_1},\mu_1 g_{\mathbb P^{N_1}})\times \cdots \times (\mathbb P^{N_l},\mu_l g_{\mathbb P^{N_l}})$ given by
\[ \iota(x_1,\ldots,x_l)
:= (\iota_1(x_1),\ldots,\iota_l(x_l)) \quad \forall\; x_j\in X_j, 1\le j\le l. \]
By Corollary \ref{Cor:Nonexist_mfd_+anti-can_to_BSD} and Proposition \ref{pro:BSD_Proj_pp form}, we have the following non-existence theorem of local holomorphic maps between Hermitian symmetric manifolds of the compact type and bounded symmetric domains preserving $(p,p)$-forms.

\begin{proposition}\label{Pro:non-exist_betw_HSSCT_BSD_ppforms}
Let $(X,\omega_X)$ be a Hermitian symmetric manifold of the compact type equipped with the canonical K\"ahler metric $g_X$.
Write $m:=\dim_{\mathbb C}(X)$.
Let $\Omega \Subset \mathbb C^n$ be a bounded symmetric domain and let $\omega_\Omega$ be the K\"ahler form of a canonical K\"ahler metric on $\Omega$.
If either {\rm(i)} $M=\Omega$ and $N=X$ or {\rm(ii)} $M=X$ and $N=\Omega$, then there does not exist any germ of holomorphic map $F:(M;x_0)\to (N; F(x_0))$ such that $F^*\omega_{N}^p=\lambda \omega_M^p$ holds, where $\lambda>0$ is a real constant and $p$ is an integer with $1\le p\le \min\{m,n\}$.
\end{proposition}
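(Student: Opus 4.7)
The proof splits according to whether the domain is the bounded symmetric domain $\Omega$ or the compact Hermitian symmetric manifold $X$, and each case reduces directly to a result proved earlier in this section.

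For case (i), with $M=\Omega$ and $N=X$, the paragraph immediately preceding the proposition already supplies the Nakagawa--Takagi holomorphic isometric embedding
\[
\iota:(X,g_X)\hookrightarrow (\mathbb P^{N_1},\mu_1 g_{\mathbb P^{N_1}})\times\cdots\times(\mathbb P^{N_l},\mu_l g_{\mathbb P^{N_l}}).
\]
This is exactly the structure demanded of the target in Proposition~\ref{pro:BSD_Proj_pp form}, and $\Omega$ equipped with a canonical K\"ahler metric matches the hypothesis on the source. Hence Proposition~\ref{pro:BSD_Proj_pp form}, applied with the projective manifold taken to be $X$ itself, rules out the existence of any germ $F:(\Omega;x_0)\to(X;F(x_0))$ satisfying $F^*\omega_X^p=\lambda\omega_\Omega^p$.

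For case (ii), with $M=X$ and $N=\Omega$, the plan is to invoke Corollary~\ref{Cor:Nonexist_mfd_+anti-can_to_BSD}. The only point to verify is that $(K_X^*,\det(g_X))$ is a positive Hermitian holomorphic line bundle. Writing $X=X_1\times\cdots\times X_l$ with each irreducible factor $(X_j,g_j)$ an irreducible compact Hermitian symmetric manifold, each $g_j$ is K\"ahler--Einstein with positive Einstein constant, so its Ricci form is a positive multiple of $\omega_j$. For the canonical metric $g_X=\sum_{j=1}^l \mu_j\,{\rm Pr}_j^* g_j$ with $\mu_j>0$, the Ricci form of $g_X$ is the corresponding positive linear combination of the pullbacks ${\rm Pr}_j^*\omega_j$, which is strictly positive definite. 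Since the Chern curvature of $(K_X^*,\det(g_X))$ equals this Ricci form, positivity follows, and Corollary~\ref{Cor:Nonexist_mfd_+anti-can_to_BSD} completes case (ii).

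In both cases the argument is short, and there is no real obstacle: the compact-type assumption on $X$ delivers both ingredients (the Nakagawa--Takagi embedding and the positivity of the anti-canonical bundle equipped with the induced metric), after which the essential work is done entirely by Proposition~\ref{pro:BSD_Proj_pp form} and Corollary~\ref{Cor:Nonexist_mfd_+anti-can_to_BSD} respectively. The only sanity check worth performing is the positivity computation for the Ricci form of $g_X$, but since $g_X$ is a product metric with strictly positive weights and no cross terms arise between distinct irreducible factors, each summand contributes a positive semidefinite piece in its own block and together they span all of $T_X$.
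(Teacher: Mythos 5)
Your proposal is correct and follows essentially the same route as the paper: case (i) is handled by Proposition~\ref{pro:BSD_Proj_pp form} via the Nakagawa--Takagi embedding, and case (ii) by Corollary~\ref{Cor:Nonexist_mfd_+anti-can_to_BSD} after checking positivity of $(K_X^*,\det(g_X))$. The only cosmetic difference is that you verify this positivity via the K\"ahler--Einstein/Ricci-form computation on each irreducible factor, whereas the paper cites the semi-positivity of the holomorphic bisectional curvature together with the positivity of the holomorphic sectional curvature; both justifications are standard and equivalent here.
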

\begin{proof}
Since $(X,g_X)$ is of semi-positive holomorphic bisectional curvature and of positive holomorphic sectional curvature, $(K_X^*,\det(g_X))$ is a positive Hermitian holomorphic line bundle over $X$. In addition, there is a holomorphic isometric embedding $\iota:(X,g_X) \hookrightarrow (\mathbb P^{N_1},\mu_1 g_{\mathbb P^{N_1}})\times \cdots \times (\mathbb P^{N_l},\mu_l g_{\mathbb P^{N_l}})$ from the construction.
Hence, the result follows directly from Corollary \ref{Cor:Nonexist_mfd_+anti-can_to_BSD} and Proposition \ref{pro:BSD_Proj_pp form}.
\end{proof}

\begin{center}
\textsc{Acknowledgment}
\end{center}
We would like to thank Dr.\,Kwok-Kin Wong for helpful discussions, and Prof.\,Yuan Yuan for valuable comments.
We would also like to thank the referee for helpful comments.

\end{document}